\documentclass[12pt]{amsart}
\usepackage{a4wide}
\usepackage{amssymb}
\usepackage{amsthm} 
\usepackage{amsmath} 
\usepackage{amscd}
\usepackage{verbatim}
\usepackage{mathrsfs}
\usepackage{dsfont,bbm} 
\usepackage{youngtab}
\usepackage[usenames]{color}
\usepackage{tikz,graphicx}
\usepackage{hyperref}
\usepackage{multicol}

\numberwithin{equation}{section}
\theoremstyle{plain}
\newtheorem{theorem}{Theorem}[section]

\newtheorem{lemma}[theorem]{Lemma}

\newtheorem*{questionA}{Question A}
\newtheorem*{questionB}{Question B}
\newtheorem*{questionC}{Question C}
\theoremstyle{definition}

\newtheorem*{example}{Example}
\newtheorem*{conjecture}{Conjecture}

\theoremstyle{remark}
\newtheorem*{remark}{Remark}

\textheight8.5in \textwidth6.5in

\DeclareMathOperator{\SL}{SL}

\newcommand{\U}{\mathbb{U}}

\newcommand{\Z}{\mathbb Z}

\newcommand{\F}{\mathbb F}

\newcommand{\M}{\mathbb{M}}

\newcommand{\Ogg}{\rm {Ogg}}

\newcommand{\Vnat}{V^{\natural}}


\newcommand{\MM}{\mathbb{M}}

\newcommand{\RR}{\mathbb R}

\renewcommand{\t}{\tau}
\usepackage{ctable}
\newcommand{\tr}{{\rm tr}}

\begin{document}

\title[The Jack Daniels Problem]
{The Jack Daniels Problem}

\author{John F. R. Duncan and Ken Ono}

\address{Department of Mathematics, Applied Mathematics and Statistics,
Case Western Reserve University, Cleveland, Ohio 44106}
\email{john.duncan@case.edu}

\address{Department of Mathematics and Computer Science,
Emory University, Atlanta, Georgia 30322}
\email{ono@mathcs.emory.edu}

\thanks{The authors are supported by the NSF. The second  author  thanks the support of the Asa Griggs Candler Fund.}

\subjclass[2010]{11F11, 11G05, 20C34}

\dedicatory{For Winnie Li and her advisor Andy Ogg}
\begin{abstract} 
In 1975 Ogg offered a bottle of Jack Daniels for an explanation of the fact that
the prime divisors of the order of the monster $\M$ are the primes $p$
for which the characteristic $p$ supersingular $j$-invariants are all defined over $\F_p$.
This coincidence is often suggested as the first hint of {\it monstrous moonshine}, the deep unexpected interplay between the monster and
modular functions. We revisit Ogg's problem, and we point out (using existing tools)
that the moonshine functions for order $p$
elements give the
set of characteristic $p$
supersingular $j$-invariants (apart from 0 and 1728). 
Furthermore, we discuss this coincidence
of the two seemingly unrelated sets of primes using the first principles of moonshine.

\end{abstract}
\maketitle

\section{Introduction}

At a seminar at the Coll\`ege de France in 1975, Tits gave the order of  the {\it monster}\footnote{The monster was only conjectured to exist at the time. In a tour de force, Griess famously constructed the monster in the early 1980s
\cite{Griess1, Griess2}.} group $\M$, the largest sporadic finite
simple group. It is the integer
$$
\#\M= 2^{46}\cdot 3^{20}\cdot 5^9\cdot 7^6\cdot 11^2\cdot 13^3\cdot 17\cdot 19\cdot 23\cdot 29
\cdot 31\cdot 41\cdot 47\cdot 59\cdot 71.
$$
Ogg noticed \cite{Ogg} that the prime divisors are 
 primes $p$  for which the characteristic $p$ supersingular $j$-invariants are all defined over $\F_p$, the set we denote by
\begin{equation}\label{ssprimes}
\Ogg_{ss}:=\left \{ 2, 3, 5, 7, 11, 13, 17, 19, 23, 29, 31, 41, 47, 59, 71\right\}.
\end{equation}
Ogg offered a bottle of Jack Daniels\footnote{{\it ``Une bouteille de Jack Daniels est offerte \`a celui qui expliquera cette coincidence.}'' (p. 7 of  \cite{Ogg}).} for an explanation of this coincidence.
Although this problem has not been addressed in the literature (to our knowledge), most experts agree that the explanation is the proof of monstrous moonshine by Borcherds.\footnote{Borcherds has not claimed the bottle of Jack Daniels.}

Here we revisit Ogg's question.
Loosely speaking, we show that the monster module knows the supersingular $j$-invariants in characteristic $p$ for precisely
the primes $p\in \Ogg_{ss}$. 
We have reformulated Ogg's problem as three questions.

\begin{questionA} 
Is there a natural method of producing the characteristic $p$ supersingular $j$-invariants (other than $0$ and $1728$) from elements of order $p$ in $\M$?
\end{questionA}

We shall show that there is indeed such a rule for the primes $p\mid \# \M$. 
This rule arises from {\it monstrous moonshine}, the deep unexpected
interplay between the monster and modular functions. 
Ogg's observation is often suggested as the first hint of moonshine,
and so it makes sense to revisit Ogg's question
from the perspective of the first principles of moonshine.
We offer answers to the following two questions.

\begin{questionB}
If $p\not \in \Ogg_{ss}$ is prime, then why would one expect a priori that $p\nmid \# \M$?
\end{questionB}

\begin{questionC}
If $p\in \Ogg_{ss}$ is prime, then why would one expect a priori that $p\mid \#\M$?
\end{questionC}

We begin by recalling aspects of the monstrous moonshine
conjecture, which was formulated \cite{MR554399} by Conway and Norton in 1978, and proven
\cite{borcherds_monstrous} by Borcherds in 1992. 
(We refer to the introduction of \cite{FLM}, and the more recent surveys \cite{DuncanGriffinOno, Gannon1} for more detailed accounts.)
As we will explain, the monstrous moonshine conjecture states that for each $g\in \M$ there is an associated McKay--Thompson series $T_g(\tau)$, which
is a  distinguished modular function, and also a 
conjugacy class invariant.
We will answer Question~A in the affirmative using McKay--Thompson series for elements of prime 
order
in $\M$. 
The solution is simple and uniform.
It turns out that monstrous moonshine provides the definitive answer to Question~B, one which requires
Ogg's own work. 


Let $j(\tau)$ denote the usual modular $j$-function
\begin{equation}
j(\tau)-744=\sum_{n=-1}^{\infty}c(n)q^n=
q^{-1}+196884q+21493760q^2+ 864299970q^3+\dots.
\end{equation}
(Note $q:=e^{2\pi i \tau}$.)
Monstrous moonshine has its origins in the famous observations 
\begin{displaymath}
\begin{split}
	196884&=1+196883,\\
	21493760&=1+196883+21296876,\\
	864299970&=1+1+196883+196883+21296876+842609326.
	\end{split}
\end{displaymath}
The right hand sides of these expressions are sums of dimensions of irreducible representations of the monster $\MM$. The irreducible representations appearing are just the first four (when ordered by size), of a total of $194$ in the character table of $\MM$. Namely,
\begin{displaymath}
1,\ \ 
196883, \ \ 
21296876,\ \ 
842609326,\ \ 
\dots \dots,\ \
258823477531055064045234375.
\end{displaymath}

Based on these observations, Thompson conjectured that there is a naturally defined
graded infinite-dimensional monster module, denoted 
\begin{equation}
\Vnat=\bigoplus_{n=-1}^{\infty}\Vnat_n,
\end{equation}
which satisfies
$\dim(\Vnat_n)=c(n)$
for $n\geq -1$. Later, $\Vnat$ was constructed explicitly by 
Frenkel, Lepowsky and Meurman \cite{FLMPNAS, FLMBerk, FLM}.
Thompson also suggested to consider the graded-trace functions
\begin{gather}\label{eqn:intro-Tg}
	T_g(\tau):=\sum_{n=-1}^{\infty}\tr(g|\Vnat_n)q^n,
\end{gather}
for $g\in \MM$, the so-called {\em McKay--Thompson series}.
Conway and Norton followed his suggestion, and they  \cite{MR554399}
 formulated the {\em monstrous moonshine conjecture}:

\begin{conjecture}[Conway--Norton]\label{conj:MonstrousMoonshine}
For each $g\in\MM$ there is a specific
group $\Gamma_g<\SL_2(\RR)$ such that $T_g(\tau)$ is the normalized Hauptmodul for $\Gamma_g$.
\end{conjecture}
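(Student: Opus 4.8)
The plan is to run the argument that became Borcherds' proof, taking the Frenkel--Lepowsky--Meurman construction as the point of departure. One assumes that $\Vnat$ carries a vertex operator algebra structure on which $\MM$ acts by automorphisms and whose graded dimension is $\sum_{n\ge-1}\dim\Vnat_n\,q^n=j(\tau)-744$; this already explains the numerical observations above. The first genuinely new object is the \emph{Monster Lie algebra} $\mathfrak{m}$: one tensors the $\MM$-module vertex algebra $\Vnat$ with the vertex algebra attached to the even unimodular lattice $\mathrm{II}_{1,1}$ of signature $(1,1)$ and then applies the no-ghost theorem of string theory (equivalently, passes to a suitable semi-infinite BRST cohomology) to extract a Lie algebra. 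The outcome is a Lie algebra $\mathfrak{m}=\bigoplus_{(m,n)\in\Z^2}\mathfrak{m}_{(m,n)}$, graded by the lattice and carrying an $\MM$-action, whose root space $\mathfrak{m}_{(m,n)}$ is isomorphic as an $\MM$-module to $\Vnat_{mn}$ for $(m,n)\ne(0,0)$; in particular $\dim\mathfrak{m}_{(m,n)}=c(mn)$, while $\mathfrak{m}_{(0,0)}$ is a two-dimensional Cartan.

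Next I would prove that $\mathfrak{m}$ is a generalized Kac--Moody (Borcherds--Kac--Moody) algebra. This requires verifying the defining package --- a triangular decomposition, an invariant bilinear form whose associated contravariant form has the positivity appropriate to a GKM algebra on the root spaces, and the structure theorem that then exhibits $\mathfrak{m}$ in terms of explicit simple roots. Here the simple roots are $(1,-1)$, the unique real one, of multiplicity $1$, together with $(1,n)$ for $n\ge1$, each of multiplicity $c(n)$; the Weyl group is the order-two group generated by $(m,n)\mapsto(n,m)$. The Weyl--Kac--Borcherds denominator identity for $\mathfrak{m}$, under the substitution $e^{(m,n)}\leftrightarrow p^mq^n$ with $p=e^{2\pi i\sigma}$ and $q=e^{2\pi i\tau}$, then collapses to the product formula $j(\sigma)-j(\tau)=p^{-1}\prod_{m>0,\,n\in\Z}(1-p^mq^n)^{c(mn)}$.

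The key step is to make everything $\MM$-equivariant. Replacing dimensions of root spaces by graded traces of $g\in\MM$ and applying the Euler--Poincar\'e principle to the homology of the positive part of $\mathfrak{m}$ yields, for each $g$, a \emph{twisted denominator identity} relating $T_g$ at $\sigma$ and $\tau$ through the family $T_{g^k}$. Expanding it produces the recursions of Conway and Norton, the \emph{replication formulae}, expressing every Fourier coefficient of $T_g$ in terms of finitely many lower coefficients of $T_g$ and of the $T_{g^k}$. One then uses the explicit FLM model together with character-table data for $\MM$ to compute $\tr(g\mid\Vnat_n)$ for the first several $n$, checks that $T_g$ agrees there with the normalized Hauptmodul $T_g^{\mathrm{CN}}$ that Conway and Norton assign to the genus-zero group $\Gamma_g$, and invokes the fact that a replicable function is determined by its principal part together with a bounded number of initial coefficients. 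This forces $T_g=T_g^{\mathrm{CN}}$ for every $g\in\MM$, which is exactly the assertion of the conjecture. I expect the main obstacle to be the construction and analysis of $\mathfrak{m}$: establishing that the no-ghost theorem produces the claimed $\MM$-module isomorphism on root spaces, and that the resulting Lie algebra genuinely satisfies the GKM axioms, so that the ordinary and twisted denominator identities are legitimately available. A secondary, more computational obstacle is assembling enough of the character theory of $\MM$ to match the leading coefficients uniformly across all $194$ conjugacy classes.
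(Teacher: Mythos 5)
The paper does not prove this statement itself; it simply cites Borcherds' 1992 proof and summarizes it in one sentence (the denominator identity of the monster Lie algebra, a Borcherds--Kac--Moody algebra built from the Frenkel--Lepowsky--Meurman vertex operator algebra $\Vnat$). Your outline is a faithful and accurate expansion of exactly that argument --- the no-ghost construction of $\mathfrak{m}$, its GKM structure and simple roots, the twisted denominator identities yielding the replication formulae, and the finite coefficient check against the Conway--Norton Hauptmoduls --- so it is the same approach, presented as a correct high-level plan rather than a complete proof.
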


\medskip
\noindent
{\it Four Remarks.}

\smallskip

\noindent
1) If $e\in \M$ is the identity, then we have $T_e(\tau)=j(\tau)-744$.

\smallskip
\noindent
2) 
In the course of formulating their conjecture, Conway and Norton introduced the notion of replicability of
modular functions (see Section~8 of \cite{MR554399}).
Their conjecture requires that for each prime
 $p\mid \#M$ there is a corresponding order $p$ element, say $g_p\in \M$, for which
$\Gamma_{g_p}=\Gamma_0(p)^+$, where $\Gamma_0(p)^+$ is the extension of
$\Gamma_0(p)$ by $w_p:=\frac{1}{\sqrt{p}}\left(\begin{smallmatrix} 0&-1\\ p&0\end{smallmatrix}\right)$.

\smallskip
\noindent
3) Note that most of the $T_g$ were given non-zero constant terms in \cite{MR554399}. See Section 6 of \cite{MR760657} for an explanation of how these were chosen. It is now understood that the $T_g$ should be {\em normalized} Hauptmoduls, satisfying $T_g(\tau)=q^{-1}+O(q)$ as $\Im(\tau)\to \infty$ for every $g\in \MM$. 

\smallskip
\noindent
4) The function $T_{g_p}(\t)$ for $g_p\in \MM$ such that $\Gamma_{g_p}=\Gamma_0(p)^+$ is denoted $T_{p+}(\t)$ in \cite{MR554399}.

\medskip


Borcherds famously proved the monstrous moonshine conjecture in \cite{borcherds_monstrous}, by considering the denominator identity of the monster Lie algebra, which is a Borcherds--Kac--Moody algebra he constructed using the vertex operator algebra structure on the moonshine module $\Vnat$ of Frenkel, Lepowsky and Meurman.

Monstrous moonshine answers Question~B immediately. Ogg completely classified
the genus zero modular curves $X_0(p)^+$, and he proved \cite{Ogg} that they are the ones for which
$p\in \Ogg_{ss}$. Therefore, if
$p\not \in \Ogg_{ss}$ is prime, then $X_0(p)^+$
has positive genus, which in turn means that there is no corresponding Hauptmodul.
Therefore, if $p\not \in \Ogg_{ss}$, then Monstrous Moonshine implies that $p\nmid \# \M$.

We turn to Question~A. 
For each prime $p\mid \# \M$ we choose an order $p$ element  $g_p\in \M$,
where $\Gamma_{g_p}=\Gamma_0(p)^+$, and we study the corresponding McKay--Thompson series. We prove that
these McKay--Thompson series know the characteristic $p$ supersingular $j$-invariants which
differ from $0$ and $1728$. \medskip
For the primes $p\mid \# \M$, we let
\begin{equation}
\U_{g_p}(\tau):= T_{g_p}(\tau) \ | \ U(p),
\end{equation}
where
$$
\left (\sum_{n\gg -\infty} a(n)q^n\right) \ | \ U(p):=\sum_{n\gg -\infty} a(pn)q^n.
$$
We have the following theorem which gives the $\U_{g_p}(\tau)\pmod p$ as reciprocals of monic linear functions of $j(\tau)$.
This is our solution to Question~A.

\begin{theorem}\label{mainthm}
Suppose that $p\mid \# \M$ is prime. Then the following are true:

\begin{enumerate}
\item  We have that $\U_{g_p}(\tau)\pmod p$  is a weight $p-1$ cusp form modulo $p$ on $\SL_2(\Z)$.

\smallskip
\item  We have that $\U_{g_p}(\tau)\pmod p$ is a $\Z$-linear sum of reciprocals of monic linear polynomials of $j(\tau)$.
The roots of these polynomials constitute the set of supersingular $j$-invariants in characteristic $p$ which do not equal $0$ and $1728$.
\end{enumerate}
\end{theorem}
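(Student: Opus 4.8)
The plan is to prove both parts of Theorem~\ref{mainthm} by combining the explicit description of $T_{g_p}(\tau)$ as the normalized Hauptmodul for $\Gamma_0(p)^+$ with two classical congruence facts: that the weight $p-1$ Eisenstein series $E_{p-1}(\tau)$ satisfies $E_{p-1}\equiv 1 \pmod p$, and that the supersingular polynomial $\mathrm{ss}_p(j)$ in characteristic $p$ has degree roughly $p/12$ and is a factor (over $\F_p$) of a suitable reduction of a weight $p-1$ form. First I would set $T_{g_p}=T_{p+}$ and observe that, because $\Gamma_0(p)^+$ is generated by $\Gamma_0(p)$ and the Fricke involution $w_p$, the function $f_p(\tau):=T_{p+}(\tau)+T_{p+}(w_p\tau)$, suitably interpreted, is $\Gamma_0(p)$-invariant; concretely the Hecke-type operator combination shows that $T_{p+}(\tau)\mid U(p)$ can be expressed through $T_{p+}(\tau)$, $T_{p+}(p\tau)$, and $j(\tau)$, $j(p\tau)$. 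I would make this precise using the standard relation $j(\tau)\mid U(p) \equiv j(\tau)^p \pmod p$ together with the modular equation $\Phi_p(j(\tau),j(p\tau))=0$, whose reduction mod $p$ is $(X-Y^p)(X^p-Y)\equiv 0$. This is the source of the mod $p$ collapse: away from $p$ the $U(p)$ operator mixes the two cusps, but modulo $p$ the modular equation degenerates and $j(p\tau)$ becomes essentially $j(\tau)^p$, forcing $\U_{g_p}$ into a function of $j(\tau)$ alone with controlled pole structure.

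For part (1): I would show $\U_{g_p}(\tau)=q^{-1}+O(q)$ term has its negative-index and constant terms killed modulo $p$. The input series $T_{g_p}=q^{-1}+O(q)$ has no constant term, so $T_{g_p}\mid U(p)$ has $q$-expansion $\sum_{n} a(pn)q^n$; the coefficient of $q^{-1}$ would come from $a(-p)$, which vanishes since $T_{g_p}$ has only a simple pole, hence $\U_{g_p}=O(q)$, i.e.\ it vanishes at $\infty$. The weight is pinned down by multiplying by $E_{p-1}\equiv 1\pmod p$: one checks that $\U_{g_p}$ agrees mod $p$ with a genuine weight $p-1$ form on $\SL_2(\Z)$ by verifying it is holomorphic on the upper half plane (it is a polynomial in $j$ divided by $j$-values, but the reciprocals of $(j-\alpha)$ for supersingular $\alpha$ are, after multiplying by the right power of $E_4,E_6$, the Hasse-invariant-type forms that are holomorphic), and that its $q$-expansion has algebraic integer coefficients. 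The cusp form assertion then follows from the vanishing at $\infty$ just established, together with the fact that $\SL_2(\Z)$ has a single cusp.

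For part (2): modulo $p$, write $\U_{g_p}$ as a rational function $P(j)/Q(j)$ in lowest terms over $\F_p$. Using that $\U_{g_p}\pmod p$ represents a weight $p-1$ form, I would invoke the standard structure theory: every weight $p-1$ cusp form mod $p$ on $\SL_2(\Z)$ is $E_{p-1}$ (hence $\equiv 1$) times a cusp form, and weight $p-1$ holomorphic modular forms mod $p$ are spanned by $\mathrm{ss}_p$-related objects; more directly, the filtration argument shows $Q(j)$ must divide the supersingular polynomial $\mathrm{ss}_p(j)/(j^{\delta_0}(j-1728)^{\delta_{1728}})$ where $\delta_0,\delta_{1728}\in\{0,1\}$ record whether $0,1728$ are supersingular mod $p$. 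To get that $Q$ is exactly that product of distinct linear factors (with simple poles, hence the "reciprocals of monic linear polynomials" statement) and that \emph{every} supersingular $j$-invariant other than $0,1728$ actually appears, I would argue that $\U_{g_p}$ genuinely has a pole at each such $\alpha$: this is where the explicit replicability/Hauptmodul properties of $T_{p+}$ enter, guaranteeing the $U(p)$-image does not accidentally lose any pole. Partial fractions over $\F_p$ then give the $\Z$-linear combination of $1/(j-\alpha)$, and the roots of the denominators are precisely the asserted set.

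The main obstacle I anticipate is part (2)'s \emph{surjectivity} claim --- that no supersingular $j$-invariant (other than $0,1728$) is missed. Showing $\U_{g_p}$ is \emph{a} weight $p-1$ cusp form mod $p$ only bounds its poles above by the supersingular locus; proving equality requires either a dimension/degree count matching $\deg \mathrm{ss}_p$ against the order of the pole of $T_{p+}\mid U(p)$, or an explicit identification of $\U_{g_p}\pmod p$ with a known generator (e.g.\ showing it equals, up to normalization, the logarithmic derivative or the $E_{p+1}/\mathrm{ss}_p$-type form whose partial fraction expansion is classically known to hit every supersingular value). I expect the cleanest route is to compute the leading behavior of $\U_{g_p}$ near each supersingular point using the local expansion of $T_{p+}$ at the elliptic/CM points of $X_0(p)^+$ and the ramification of $U(p)$, but pinning down that every such point contributes a nonzero residue is the delicate step.
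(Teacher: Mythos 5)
There is a genuine gap, and it sits exactly where you flagged it. The paper's proof has two inputs that your write-up is missing. First, the reduction of $\U_{g_p}$ to $(j(\tau)-744)\mid U(p)$ is done in one stroke by the replication formula: since $w_p\in\Gamma_{g_p}$ and $g_p^p=e$, one has $T_{g_p}(\tau)+p\left(T_{g_p}(\tau)\mid U(p)\right)=j(\tau)-744$, whence $T_{g_p}\equiv j-744\pmod p$ and $\U_{g_p}\equiv (j-744)\mid U(p)\pmod p$. Your proposed substitute for this --- the Fricke involution plus the modular equation --- is not only vaguer but rests on the claimed ``standard relation'' $j(\tau)\mid U(p)\equiv j(\tau)^p\pmod p$, which is false: the Frobenius congruence gives $j(p\tau)\equiv j(\tau)^p\pmod p$, i.e.\ it describes $V(p)$, not $U(p)$. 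The mod $p$ behavior of $j\mid U(p)$ is genuinely nontrivial; it is not a polynomial in $j$ but a sum of reciprocals, and determining it is the whole problem.

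Second, the step you correctly identify as delicate --- that \emph{every} supersingular $\alpha\neq 0,1728$ contributes a nonzero simple pole, and none is missed --- cannot be extracted from the filtration/structure theory of mod $p$ forms, which as you note only bounds the denominator from above. The paper resolves it by quoting the Dwork--Koike theorem (in Swisher's reformulation), which gives the exact partial-fraction expansion
$\left(j(\tau)-744\right)\mid U(p)\equiv -\sum_{\alpha}A_p(\alpha)/(j(\tau)-\alpha)-\sum_{g}\bigl(B_p(g)j(\tau)+C_p(g)\bigr)/g(j(\tau))\pmod p$
over the supersingular locus; combined with Ogg's classification (for $p\mid\#\M$ all supersingular invariants lie in $\F_p$, so the quadratic terms vanish), this is precisely claim (2). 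Your outline neither cites this result nor supplies the residue computation that would replace it, so part (2) remains unproven; you also nowhere invoke Ogg's theorem, without which the ``monic linear'' (as opposed to irreducible quadratic) form of the denominators is not justified. Part (1) of your argument (multiplying by $E_{p-1}\equiv 1$, vanishing at $\infty$, descent to level $1$ via $T(p)$) is essentially the paper's and is fine.
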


\medskip
\noindent
{\it Two remarks.}

\smallskip

\noindent
1) We give the exact expressions of $\U_{g_p}(\tau)\pmod p$ in two tables in the Appendix.
One table gives expressions in terms of reciprocals of monic polynomials in $j(\tau)$, and the
other table gives expressions in terms of weight $p-1$ cusp forms on $\SL_2(\Z)$.

\smallskip
\noindent
2)  There are no supersingular $j$-invariants, apart from possibly $0$ and $1728$, for the primes $p\leq 11$.
This explains the vanishing of $\U_{g_p}(\tau)\pmod p$ in these cases.

\bigskip

\begin{example} For the prime $p=71$, we have 
$$
T_{g_{71}}(\tau)=\frac{\Theta(4,2,18;\tau)-\Theta(6,2,12;\tau)}{2\eta(\tau)\eta(71\tau)}=
q^{-1}+q+q^2+q^3+q^4+2q^5+2q^6+3q^7+\dots.
$$
Here $\eta(\tau)$ is the usual Dedekind eta-function, and
$$\Theta(a,b,c;\tau):=\sum_{x,y\in \Z}q^{\frac{1}{2}\left(ax^2+bxy+cy^2\right)}.
$$
By direct calculation, we find that
\begin{displaymath}
\begin{split}
&\U_{g_{71}}(\tau)=2773q+302729q^2+12173239q^3+285152905q^4+4692994938q^5+\dots\\
&\ \ \ \ \equiv 4q+56q^2+5q^3+7q^4+18q^5+67q^6+66q^7+55q^8+47q^9+68q^{10}+\dots \pmod{71}.
\end{split}
\end{displaymath}
On the other hand, using the standard Eisenstein series $E_4(\tau)$ and $E_6(\tau)$, and the weight 12 cusp form
$\Delta(\tau)$, we have
\begin{displaymath}
\begin{split}
&4E_4E_6^9\Delta+12E_4E_6^7\Delta^2+65E_4E_6^5\Delta^3+11E_4E_6^3\Delta^4+35E_4E_6\Delta^5\\
&\ \ \ \ \equiv \frac{18}{j(\tau)+5}+\frac{48}{j(\tau)+23}+\frac{16}{j(\tau)+30}+\frac{40}{j(\tau)+31}+ \frac{24}{j(\tau)+54}\\
&\ \ \ \ \equiv 4q+56q^2+5q^3+7q^4+18q^5+67q^6+66q^7+55q^8+47q^9+68q^{10}\dots \pmod{71}.
\end{split}
\end{displaymath}
The supersingular $j$-invariants, apart from $0$ and $1728$, are
$$-5, -23, -30, -31, -54\pmod{71}.
$$
They are all defined over $\F_{71}$.
\end{example}

\smallskip

In Section~\ref{proof} we easily derive Theorem~\ref{mainthm}
from monstrous moonshine, the replicability of McKay--Thompson series, and a classical
result of Dwork and Koike on the $p$-adic rigidity of the $j$-function.
In Section~\ref{C} we shall discuss our answer to Question~C.

\section*{Acknowledgements}
\noindent
The authors thank Igor Frenkel, Bob Griess and Andy Ogg for their comments.

\section{Proof of Theorem~\ref{mainthm}}\label{proof}
Theorem~\ref{mainthm}  is a simple observation, obtained by 
combining replicability identities in montrous moonshine
with a classical result of Dwork and Koike on the $p$-adic rigidity of $j(\tau)$.
The notion of replicability (see Section~8 of \cite{MR554399}) explains
aspects of the the group law in $\M$ (namely, the power maps, $g\mapsto g^p$) in terms of Fourier expansions of
McKay--Thompson series.  Here we give one instance
of these identities.

\begin{lemma}\label{replicability}
Suppose that $g\in \M$, and that $p$ is a prime for which 
$w_p=\frac{1}{\sqrt{p}}\left(\begin{smallmatrix} 0&-1\\p & 0 \end{smallmatrix}\right)\in \Gamma_g$.
Then the following are true:

\begin{enumerate}
\item We have that
$$
T_g(\tau)+ p \left(T_g(\tau) \ | \ U(p)\right) = T_{g^p}(\tau).
$$

\item
If moreover $g^p=e$, then 
$$
T_g(\tau) + p \left(T_g(\tau) \ | \ U(p)\right)= j(\tau)-744.
$$
\end{enumerate}
\end{lemma}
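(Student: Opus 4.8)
The plan is to deduce both statements from the theory of replicable functions as developed by Conway and Norton, together with Borcherds' theorem identifying each $T_g$ with the normalized Hauptmodul for $\Gamma_g$. Recall that a normalized $q$-series $F(\tau)=q^{-1}+\sum_{m\geq 1}H_m q^m$ is \emph{replicable} if there exist functions $F^{(n)}$ (the \emph{replicates}) such that the coefficients of $F$ satisfy the Faber-polynomial identities of Section~8 of \cite{MR554399}; equivalently, for each $n$ one has
\begin{equation*}
\sum_{ad=n,\ 0\le b<d} F^{(a)}\!\left(\frac{a\tau+b}{d}\right) = \Phi_n\big(F(\tau)\big),
\end{equation*}
where $\Phi_n$ is the (monic, degree $n$) Faber polynomial determined by $F$. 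For $n=p$ prime this reads
\begin{equation*}
F^{(p)}(p\tau) + \sum_{b=0}^{p-1} F\!\left(\frac{\tau+b}{p}\right) = \Phi_p\big(F(\tau)\big).
\end{equation*}
The key structural input from monstrous moonshine (this is part of Conway--Norton, established in the course of Borcherds' proof) is that $T_g$ is replicable with $p$-th replicate $T_g^{(p)} = T_{g^p}$; and the Faber polynomial $\Phi_p$ is forced by the normalization $T_g=q^{-1}+O(q)$ to produce $\sum_{b=0}^{p-1} T_g\big(\tfrac{\tau+b}{p}\big) = \Phi_p(T_g(\tau)) - T_{g^p}(p\tau)$, where the left side is readily computed to equal $p\cdot\big(T_g|U(p)\big)(p\tau) + T_g^{(p)}$-type corrections. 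Here I would use the hypothesis $w_p\in\Gamma_g$: since $\Gamma_g$ is normalized by (indeed contains) the Fricke involution, the fixing group $\Gamma_g$ contains $\Gamma_0(p)$, so $T_g$ is $\Gamma_0(p)$-invariant and hence the standard Hecke-operator computation applies cleanly, giving
\begin{equation*}
T_g(\tau) + p\big(T_g|U(p)\big)(\tau) = T_{g^p}(\tau),
\end{equation*}
which is statement (1). (Equivalently: this is precisely the $n=p$ replication formula, rewritten in terms of the $U(p)$ operator after accounting for the $q^{-1}$-term bookkeeping that forces $T_g^{(p)}=T_{g^p}$.)

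For part (2), I would simply specialize: if $g^p = e$ then $T_{g^p} = T_e$, and by Remark~1 following Conjecture~\ref{conj:MonstrousMoonshine} we have $T_e(\tau) = j(\tau) - 744$. Substituting into (1) yields
\begin{equation*}
T_g(\tau) + p\big(T_g|U(p)\big)(\tau) = j(\tau) - 744,
\end{equation*}
as claimed. Note that $w_p\in\Gamma_{g_p}$ holds for the chosen order-$p$ elements $g_p$ since $\Gamma_{g_p}=\Gamma_0(p)^+$ contains $w_p$ by definition, so the hypothesis of the lemma is genuinely satisfied in the application to Theorem~\ref{mainthm}.

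The main obstacle is making the first step fully rigorous rather than invoking replicability as a black box: one must verify that the relevant Faber/replication identity for $n=p$ is \emph{exactly} the Hecke-type relation $T_g + p(T_g|U(p)) = T_{g^p}$, with no extra lower-order terms. The cleanest route is probably to argue directly with Hauptmoduls: because $w_p\in\Gamma_g$ forces $\Gamma_0(p)\le\Gamma_g$, the function $T_g$ descends to $X_0(p)$, and one checks that $T_g + p(T_g|U(p))$ is a $q$-series of the form $q^{-1}+O(q)$ that is invariant under $\Gamma_g^{(p)} := $ the group for which $T_{g^p}$ is the Hauptmodul (this group-theoretic matching $\Gamma_g \rightsquigarrow \Gamma_{g^p}$ under the $p$-th power map is the Conway--Norton prescription, verified case-by-case for the relevant primes). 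Since a normalized Hauptmodul is unique, the two $q$-series must coincide. Alternatively, for the specific elements $g_p$ with $\Gamma_{g_p}=\Gamma_0(p)^+$, one can verify the identity by a short explicit computation with the known $q$-expansions, which suffices for the proof of Theorem~\ref{mainthm}.
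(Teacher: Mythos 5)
Your plan matches the paper's in spirit: the paper's entire proof is to quote, from p.~318 of \cite{MR554399}, the Fricke-case replication identity $T_g(\tau)+\sum_{b=0}^{p-1}T_g\bigl(\tfrac{\tau+b}{p}\bigr)=T_{g^p}(\tau)$ and to observe that the sum over $b$ is exactly $p\,(T_g\,|\,U(p))(\tau)$, after which (2) is the specialization $T_e=j-744$. Your part (2) is fine. The gap is in the bridge you build to (1). The general Faber-polynomial replication formula at $n=p$ reads $T_{g^p}(p\tau)+\sum_{b=0}^{p-1}T_g\bigl(\tfrac{\tau+b}{p}\bigr)=\Phi_p(T_g(\tau))$, i.e.\ $T_{g^p}(p\tau)+p\,(T_g\,|\,U(p))(\tau)=\Phi_p(T_g(\tau))$; note that the sum over $b$ equals $p\,(T_g\,|\,U(p))$ evaluated at $\tau$, not at $p\tau$ as you wrote, and there are no further ``corrections.'' This is a genuinely different identity from the one asserted in the lemma (for $p=2$ the Faber identity is quadratic in $T_g$, while the lemma is linear), and converting one into the other is precisely where the hypothesis $w_p\in\Gamma_g$ must do its work; your write-up leaves that conversion as ``bookkeeping,'' which is the missing step. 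A smaller error: $w_p\in\Gamma_g$ does not by itself give $\Gamma_0(p)\le\Gamma_g$ (true when $\Gamma_g=\Gamma_0(p)^+$, but not, say, when $\Gamma_g=\Gamma_0(2p)+$).

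Your fallback ``cleanest route'' is the correct fix and should be the main argument; here is the computation it needs in the case relevant to Theorem~\ref{mainthm}, namely $g^p=e$ and $\Gamma_g=\Gamma_0(p)^+$. Take coset representatives $I$ and $ST^b=\left(\begin{smallmatrix}0&-1\\1&b\end{smallmatrix}\right)$, $0\le b<p$, for $\Gamma_0(p)\backslash\SL_2(\Z)$, and form the trace $T_g(\tau)+\sum_{b=0}^{p-1}T_g\bigl(-1/(\tau+b)\bigr)$, which is $\SL_2(\Z)$-invariant and holomorphic on $\H$. Since $-1/(\tau+b)=w_p\cdot\tfrac{\tau+b}{p}$ and $T_g$ is $w_p$-invariant, the trace equals $T_g(\tau)+p\,(T_g\,|\,U(p))(\tau)$, whose $q$-expansion is $q^{-1}+O(q)$ because $T_g$ is normalized with vanishing constant term and $T_g\,|\,U(p)=O(q)$. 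Uniqueness of the normalized Hauptmodul for $\SL_2(\Z)$ then gives $j(\tau)-744$. For general $g$ with $w_p\in\Gamma_g$ one either runs the analogous trace argument down to $\Gamma_{g^p}$ or, as the paper does, cites the identity from \cite{MR554399} outright.
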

\begin{proof}
Under these hypotheses, the McKay--Thompson series $T_g(\tau)$ and $T_{g^p}(\tau)$ are related by the {\it replicable} identity (see p. 318 of \cite{MR554399})
\begin{displaymath}
T_g(\tau)+T_g\left(\frac{\tau}{p}\right)+T_g\left(\frac{\tau+1}{p}\right)+\dots+
T_g\left(\frac{\tau+p-1}{p}\right)=T_{g^p}(\tau).
\end{displaymath}
This immediately implies the claim that
\begin{displaymath}
T_g+ p T_g \ | \ U(p) = T_{g^p}.
\end{displaymath}
Moreover, if $g$ has order $p$, then (2) follows from the fact that
$T_e(\tau)=j(\tau)-744$.
\end{proof}

Now we recall a classical result of Dwork and Koike \cite{Dwork, Koike}, as reformulated
by Swisher \cite{Swisher}.
Suppose that
 $p\geq 5$ is prime, let $SS_p$ be the set of characteristic $p$ supersingular $j$-invariants in $\F_p\setminus \{0, 1728\}$, and
 let $SS^*_p$ be the set of monic irreducible quadratic polynomials $g(x)\in F_p[x]$ whose roots are
 the supersingular $j$-invariants (if any) in $\F_{p^2}\setminus \F_p$.
 Then we have the following congruence for $\left (j(\tau)-744\right)\ | \ U(p)\pmod p$.
 
 \begin{lemma}\label{rigidity}
If $p\geq 5$ is prime, then
 for every $\alpha \in SS_p$  (resp. every $g(x)\in SS_p^*$) there is an integer $A_p(\alpha)$
(resp. pair of integers integers $B_p(g)$ and $C_p(g)$) for which
\begin{displaymath}
 \left(j(\tau)-744\right)\ | \ U(p) \equiv -\sum_{\alpha \in SS_p}\frac{A_p(\alpha)}{j(\tau)-\alpha}-
\sum_{g(x)\in SS^*_p} 
\frac{B_p(g)j(\tau)+C_p(g)}
{g(j(\tau))}
\pmod{p}.
\end{displaymath}
\end{lemma}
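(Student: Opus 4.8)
The plan is to prove Lemma~\ref{rigidity} by exploiting the $p$-adic rigidity of $j(\tau)$ established by Dwork and Koike, in the streamlined form due to Swisher. First I would recall the key input: if $p\geq 5$ is prime, then $\bigl(j(\tau)-744\bigr)\mid U(p)$ is congruent modulo $p$ to a rational function of $j(\tau)$ whose denominator is a power of the supersingular polynomial $ss_p(x)\in\F_p[x]$, i.e.\ the monic polynomial of degree roughly $p/12$ whose roots are exactly the supersingular $j$-invariants in characteristic $p$. Concretely, Swisher's reformulation states that there is a polynomial identity over $\F_p$ expressing $\bigl(j-744\bigr)\mid U(p)$ as $-F(j)/ss_p(j)$ for some explicit $F\in\F_p[x]$ of degree strictly less than $\deg ss_p$; the fact that only a \emph{simple} power of $ss_p$ occurs is the content of the rigidity statement, and is where the real arithmetic geometry (the canonical lift / the Deuring lift, or the Kronecker congruence $\Phi_p(x,y)\equiv(x^p-y)(x-y^p)\pmod p$ for the modular polynomial) enters.

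Next I would perform a partial fraction decomposition of $-F(x)/ss_p(x)$ over $\F_p$. Since $ss_p$ factors over $\F_p$ as a product of distinct linear factors $(x-\alpha)$ with $\alpha\in SS_p$, together with distinct irreducible quadratics $g(x)\in SS_p^*$ (the supersingular invariants living in $\F_{p^2}\setminus\F_p$, which come in Frobenius-conjugate pairs), and since $\deg F<\deg ss_p$, standard partial fractions give
\begin{displaymath}
\frac{-F(x)}{ss_p(x)} = -\sum_{\alpha\in SS_p}\frac{A_p(\alpha)}{x-\alpha}
-\sum_{g(x)\in SS_p^*}\frac{B_p(g)\,x+C_p(g)}{g(x)}
\end{displaymath}
for uniquely determined $A_p(\alpha), B_p(g), C_p(g)\in\F_p$, which we then lift to integers. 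The possible exceptional behaviour at $j=0$ and $j=1728$ (where $ss_p$ may or may not have a root depending on $p\bmod 3$ and $p\bmod 4$, and where the elliptic curve has extra automorphisms) is precisely what the hypothesis $SS_p\subset\F_p\setminus\{0,1728\}$ and the definition of $SS_p^*$ are designed to absorb — those roots, if present, are simply included among the $\alpha$ or among the roots of the $g$, and the rigidity statement guarantees they still appear to the first power. Substituting $x=j(\tau)$ and reducing mod $p$ yields the claimed congruence.

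The main obstacle is not the partial-fraction bookkeeping but the rigidity input itself: showing that $\bigl(j-744\bigr)\mid U(p)$ is a rational function of $j$ at all modulo $p$, and that its denominator is $ss_p(j)$ to exactly the first power with numerator of strictly smaller degree. This is exactly the Dwork--Koike theorem, so in the write-up I would not reprove it; I would cite \cite{Dwork, Koike} together with Swisher's formulation \cite{Swisher} and quote the precise statement, then reduce Lemma~\ref{rigidity} to it in the two short steps above. One small point requiring care is matching normalizations: Dwork and Koike typically work with $j$ itself (or with $E_{p-1}$ acting on modular forms mod $p$) rather than $j-744$, but since $744$ is a constant, $\bigl(j-744\bigr)\mid U(p)$ and $j\mid U(p)$ differ by the constant $744$ (as $744\mid U(p)=744$), so modulo $p$ this is an innocuous shift that only alters the value of $F$ at a single degree, not its shape; I would note this explicitly so the constant terms in the partial fraction expansion are unambiguous.
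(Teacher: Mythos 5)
Your proposal is correct and follows essentially the same route as the paper: cite the Dwork--Koike rigidity theorem in Swisher's formulation and extract the displayed congruence by elementary manipulation. The only point to watch is that Swisher's Theorem 1.1 is actually stated as a congruence for $j(p\tau)$ modulo $p^2$ (already in partial-fraction form, so no decomposition of $-F(j)/ss_p(j)$ is needed), and passing from it to the claimed statement about $\left(j(\tau)-744\right)\mid U(p)$ modulo $p$ uses the identity $p\left(j(\tau)-744\right)\mid T(p)=j(p\tau)-744+p\left(\left(j(\tau)-744\right)\mid U(p)\right)$, which is the one short step your write-up would need to make explicit.
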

\begin{proof}
We begin by recalling Swisher's reformulation \cite{Swisher} of the result of Dwork and Koike. 
The content of Theorem 1.1 in \cite{Swisher} is that
\begin{displaymath}
\begin{split}
j(p\tau)\equiv p\left( j(\tau)-744\right) &\ | \ T(p)+744\\
&\ \ \ \ \ +p\sum_{\alpha \in SS_p}\frac{A_p(\alpha)}{j(\tau)-\alpha}+p
\sum_{g(x)\in SS^*_p} 
\frac{B_p(g)j(\tau)+C_p(g)}
{g(j(\tau))}
\pmod{p^2}.
\end{split}
\end{displaymath}
Here $T(p)$ is the usual $p$th Hecke operator of weight $0$\footnote{Swisher used a different normalization for her Hecke operators.} 
Since $$p \left( j(\tau)-744\right)\ | \ T(p)= j(p\tau)-744 +p\left( j(\tau)-744\ | \ U(p)\right),$$
and $T_e(\tau)=j(\tau)-744$, 
 we find that
\begin{displaymath}
\left( j(\tau)-744)\right) \ | \ U(p)\equiv -\sum_{\alpha \in SS_p}\frac{A_p(\alpha)}{j(\tau)-\alpha}-
\sum_{g(x)\in SS^*_p} 
\frac{B_p(g)j(\tau)+C_p(g)}
{g(j(\tau))}
\pmod{p}.
\end{displaymath}
\end{proof}

\begin{proof}[Proof of Theorem~\ref{mainthm}]
For each prime $p$ dividing $\#\M$ we choose an order $p$ element $g_p\in \M$ so that $\Gamma_{g_p}=\Gamma_0(p)^+$. In particular, $w_p\in \Gamma_{g_p}$.  Lemma~\ref{replicability} then implies that 
$$
T_{g_p}(\tau)\equiv j(\tau)-744\pmod p.
$$
If $p=2$ or $p=3$ then $(j(\t)-744))\mid U(p)\equiv 0\pmod p$, so $\U_{g_p}(\tau)\equiv 0\pmod p$ and the claims are true in these cases. 

By Ogg's theorem about $\Ogg_{ss}$, if $p\geq 5$, then $SS_p^*$ is empty since $p\in \Ogg_{ss}$.
Therefore,
 Lemma~\ref{rigidity} then implies that
\begin{displaymath}
\U_{g_p}(\tau):=\left( T_{g_p}(\tau)\ | \ U(p)\right)\equiv \left( j(\tau)-744\right)\ | \ U(p)
\equiv  -\sum_{\alpha \in SS_p}\frac{A_p(\alpha)}{j(\tau)-\alpha}\pmod{p}.
\end{displaymath}
This is claim (2).

It is well known that the divisor of the Eisenstein series $E_{p-1}(\tau)$ is the Hasse invariant
for the locus of supersingular $j$-invariants (for example, see \cite{Dwork}).
Moreover, since $E_{p-1}(\tau)\equiv 1\pmod p$, it follows that 
$$
E_{p-1}(\tau)\U_{g_p}(\tau)\equiv \U_{g_p}(\tau)\pmod p
$$
is the reduction modulo $p$ of a weight $p-1$ modular form on $\Gamma_0(p)$ which vanishes at the cusp $i \infty$ because
$\U_{g_{p}}(\tau)$ has vanishing constant term. Although $U(p)$ is not an operator on
level 1 forms, this form modulo $p$ is actually even on $\SL_2(\Z)$ because
$$
(j(\tau)-744) \ | \ U(p) \equiv p(j(\tau)-744) \ | \ T(p)\pmod p.
$$
Therefore, it is congruent to a weight $p-1$ cusp form on $\SL_2(\Z)$.
This is claim (1).
\end{proof}

\section{Discussion of Question C}\label{C}
The seminal paper of Conway and Norton
\cite{MR554399} contains 
tables of modular functions which are constructed from
elementary theta series. As a guiding principle, we suggest that such Hauptmoduls
are the ones expected to be McKay--Thompson series.

Suppose that $p\in \Ogg_{ss}$. Then $X_0(p)^+$ has genus zero, and there
is a normalized Hauptmodul $h_p(\tau)$. By the theory of Hecke operators, it follows that
 $h_p(\tau)+p\left (h_p(\tau)\ | \ U(p)\right) =q^{-1}+O(q)$ is a modular function on $\SL_2(\Z)$ which is holomorphic on the
 upper half of the complex plane, which means that
 $$
 h_p(\tau)+p\left (h_p(\tau)\ | \ U(p)\right) =j(\tau)-744.
 $$
 Therefore, we have
 \begin{equation}\label{eqnC}
 h_p(\tau)\ | \ U(p)\equiv \left( j(\tau)-744\right) \ | \ U(p)\pmod p.
 \end{equation}
  Arguing as in the proof of Theorem~\ref{mainthm} (1),
the right hand side of (\ref{eqnC}) is the mod $p$ reduction of a weight $p-1$ cusp form
on $\SL_2(\Z)$. 
By multiplying by $j'(\tau)=\frac{E_4^2(\tau)E_6(\tau)}{\Delta(\tau)}$,  we find that
\begin{equation}\label{eqnC2}
j'(\tau)\cdot \left( h_p(\tau)\ | \ U(p)\right)\pmod p
\end{equation}
is the reduction mod $p$ of a level 1 holomorphic modular form of weight $p+1$.

Deep work of Dong and Mason \cite{DongMason} proves that the monster module $V^{\natural}$
 produces many more modular objects than 
 Hauptmoduls.  One of their results is that all holomorphic modular forms of level 1
arise from vectors in $V^{\natural}$ (see Theorem~1 of \cite{DongMason}). 
In particular, vectors in $V^{\natural}$  yield the holomorphic modular forms
whose reductions mod $p$  appear in (\ref{eqnC2}).

We now recall the famous fact that the reduction mod $p$ of $S_2(\Gamma_0(p))$, the space of weight 2 cusp forms on
$\Gamma_0(p)$, is the reduction mod $p$ of $S_{p+1}$, the space of weight $p+1$ level 1 cusp forms. (For example, see
\cite{Serre}.)
Therefore, the level 1 weight $p+1$ forms whose reductions mod $p$ appear in (\ref{eqnC2})
can be described in terms of a basis of forms in $S_2(\Gamma_0(p))$ reduced mod $p$.

We expect that  $h_p(\tau)=T_g(\tau)$ for some McKay--Thompson series. If this is confirmed, then
Lemma~\ref{replicability}~(2) 
implies that $p\mid \#\M$. This is replicability.
According to the guiding principle, we  ask whether $h_p(\tau)$ can be described
by elementary theta functions.
It turns out that Pizer \cite{Pizer} already studied this question
for $S_2(\Gamma_0(p))$, which contains a form which satisfies the mod $p$ congruence above in (\ref{eqnC2}).
In 1978 he proved that the primes in  $\Ogg_{ss}$ are precisely the primes
for which $S_2(\Gamma_0(p))$ is spanned by Hecke's discriminant $p$ theta functions.
Therefore, we expect $h_p(\tau)$ to be similarly described globally. The guiding
principle then suggests that it is a McKay--Thompson series, which in turn would imply that
$p\mid \# \M$. 

\begin{remark} This discussion does not prove
that every $p\in \Ogg_{ss}$ divides $\# \M$. It merely explains how the first principles of
moonshine suggest this implication. 
Monstrous moonshine is the proof.
Does this then provide a completely satisfactory solution to Ogg's problem?
Maybe or maybe not. Perhaps someone will one day furnish a map from the characteristic $p$ supersingular $j$-invariants to elements of order $p$ where
the group structure of $\M$ is apparent. 
\end{remark}


\section*{Appendix}

Table 1 gives the names $pZ$ of the conjugacy classes $[g_p]\subset \MM$ such that $\Gamma_{g_p}=\Gamma_0(p)^+$. 
We follow the convention of writing $pAB$ for $pA\cup pB$.
Tables 2 and 3 express $\U_{g_p}(\tau)\pmod p$
in terms of supersingular $j$-invariants and level 1 cusp forms.

\smallskip
\begin{table}[h]
\caption{Conjugacy Classes in $\MM$}
\begin{tabular}{|r|c|}\hline
$p$& $[g_p]\subset\MM$\\ 
\hline
$2$& $2A$\\ 
$3$& $3A$\\ 
$5$& $5A$\\ 
$7$& $7A$\\ 
$11$& $11A$\\ 
$13$& $13A$\\ 
$17$& $17A$ \\ 
$19$& $19A$\\ 
$23$& $23AB$\\ 
$29$& $29A$\\ 
$31$& $31AB$\\ 
$41$& $41A$\\ 
$47$& $47AB$ \\ 
$59$& $59AB$\\ 
$71$& $71AB$\\ \hline
\end{tabular}
\end{table}

\begin{table}[ht]
\caption{$\U_{g_p}(\tau)\pmod{p}$}
\begin{tabular}{|r|l|}\hline
$p$& $\U_{g_p}(\tau) \pmod p$\\ 
\hline
$2$&$0$ \\ 
$3$&$0$ \\ 
$5$&$0$ \\ 
$7$&$0$ \\ 
$11$&$0$ \\ 
$13$&$\frac{12}{j(\tau)+8}$ \\ 
$17$&$\frac{4}{j(\tau)+9}$ \\ 
$19$&$\frac{7}{j(\tau)+12}$ \\ 
$23$&$\frac{4}{j(\tau)+4}$ \\ 
$29$&$\frac{9}{j(\tau)+4}+\frac{23}{j(\tau)+27}$ \\ 
$31$&$\frac{20}{j(\tau)+27}+\frac{7}{j(\tau)+29}$ \\ 
$41$&$\frac{36}{j(\tau)+9}+\frac{20}{j(\tau)+13}+\frac{31}{j(\tau)+38}$ \\ 
$47$&$\frac{32}{j(\tau)+3}+\frac{4}{j(\tau)+37}+\frac{17}{j(\tau)+38}$ \\ 
$59$&$\frac{21}{j(\tau)+11}+\frac{5}{j(\tau)+12}+\frac{4}{j(\tau)+31}+\frac{3}{j(\tau)+44}$ \\ 
$71$&$\frac{18}{j(\tau)+5}+\frac{48}{j(\tau)+23}+\frac{16}{j(\tau)+30}+\frac{40}{j(\tau)+31}+ \frac{24}{j(\tau)+54}$\\ \hline
\end{tabular}
\end{table}

\vskip1in

\begin{table}[ht]
\caption{$\U_{g_p}(\tau)\pmod{p}\in S_{p-1}\pmod p$}\label{Japprox}
\begin{tabular}{|r|l|}\hline
$p$& $\U_{g_p}(\tau) \pmod p$\\ 
\hline
$2$&$0$ \\ 
$3$&$0$ \\ 
$5$&$0$ \\ 
$7$&$0$ \\ 
$11$&$0$ \\ 
$13$&$E_4\Delta$ \\ 
$17$&$4E_4\Delta$ \\ 
$19$&$7E_6\Delta$ \\ 
$23$&$4E_4E_6\Delta$ \\ 
$29$&$3E_4^3\Delta+16E_4\Delta^2$ \\ 
$31$&$27E_6^3\Delta+26E_6\Delta^2$ \\ 
$41$&$5E_4E_6^4\Delta+33E_4E_6^2\Delta^2+20E_4\Delta^3$ \\ 
$47$&$6E_4E_6^5\Delta+10E_4E_6^3\Delta^2+16E_4E_6\Delta^3$ \\ 
$59$&$33E_4E_6^7\Delta+4E_4E_6^5\Delta^2+14E_4E_6^3\Delta^3+38E_4E_6\Delta^4$ \\ 
$71$&$4E_4E_6^9\Delta+12E_4E_6^7\Delta^2+65E_4E_6^5\Delta^3+11E_4E_6^3\Delta^4+35E_4E_6\Delta^5$\\ \hline
\end{tabular}
\end{table}

\clearpage

\providecommand{\bysame}{\leavevmode\hbox to3em{\hrulefill}\thinspace}
\providecommand{\MR}{\relax\ifhmode\unskip\space\fi MR }

\providecommand{\MRhref}[2]{  
\href{http://www.ams.org/mathscinet-getitem?mr=#1}{#2}
}
\providecommand{\href}[2]{#2}

\end{document}